\newtheorem{theorem}{Theorem}[section]
\newtheorem{lemma}[theorem]{Lemma}
\newtheorem{problem}[theorem]{Open Problem}
\theoremstyle{definition}
\newtheorem{example}[theorem]{Example}
\numberwithin{equation}{section}
\newcommand{\dist}{{\mathop{\mbox{dist}}}}
\newcommand{\volk}{{\mathop{\lambda_k}}}
\newcommand{\voln}{{\mathop{\lambda_n}}}
\newcommand{\R}{\mathbb{R}}
\newcommand{\Sn}{S^{n-1}}
\newcommand{\al}{{\alpha}}
\newcommand{\be}{{\beta}}
\newcommand{\de}{{\delta}}
\newcommand{\eps}{{\varepsilon}}
\newcommand{\ga}{{\gamma}}
\newcommand{\la}{{\lambda}}
\newcommand{\De}{{\bigtriangleup}}
\newcommand{\calI}{{\mathcal I}}
\newcommand{\calS}{{\mathcal S}}
\newcommand{\calR}{{\mathcal R}}
\newcommand{\calT}{{\mathcal T}}
\begin{document}
\title[Convergence in shape]{Convergence in shape 
\\of Steiner symmetrizations}
\author[{G.~Bianchi, A.~Burchard, P.~Gronchi and A~Vol\v ci\v c}]{Gabriele Bianchi, Almut Burchard, Paolo Gronchi,\\ and Aljo\v{s}a Vol\v ci\v c}
\address{Dipartimento di Matematica, Universit\`a di Firenze, 
Viale Morgagni 67/A, Firenze, Italy I-50134}
\email{gabriele.bianchi@unifi.it}\email{paolo.gronchi@unifi.it}
\address{Department of Mathematics, University of Toronto, 40 St. George Street, Toronto, Ontario, Canada M5S 2E4}
\email{almut@math.toronto.edu}
\address{Dipartimento di Matematica, Universit\`a della Calabria, Ponte Bucci, cubo 30B, Arcavacata di Rende, Cosenza, Italy I-87036}
\email{volcic@unical.it}

\subjclass[2000]{Primary 52A40; Secondary 28A75, 11K06, 26D15}
\thanks{Supported in part by NSERC through Discovery Grant No. 311685-10.}

\begin{abstract} 
There are sequences of directions such that, given any compact set $K\subset\R^n$, the  sequence of iterated Steiner symmetrals of $K$ in these directions converges to a ball. 
However examples show that Steiner symmetrization along a sequence of directions whose differences are square summable does not generally converge.
(Note that this may happen even with sequences of directions which are dense in $\Sn$.) 
Here we show that such sequences converge \emph{in shape}.
The limit need not be an ellipsoid  or even a convex set.  

We also deal with uniformly distributed sequences of directions, and with a recent result of Klain on Steiner symmetrization along sequences chosen from a finite set of directions.
\end{abstract}
\maketitle

\section{Introduction}

Steiner symmetrization is often used to identify the ball
as the solution to geometric optimization problems.
Starting from any given body, one can find
sequences of iterated Steiner symmetrals
that converge to the centered ball of the same volume 
as the initial body. If the
objective functional improves along the sequence,
the ball must be optimal.

Most constructions of convergent sequences of
Steiner symmetrizations rely on auxiliary geometric
functionals that decrease monotonically along the 
sequence.  For example, the perimeter and the moment 
of inertia of a convex body decrease {\em strictly} under Steiner
symmetrization unless the body is already
reflection symmetric~\cite{Steiner,CS},
but for general compact sets, there are additional equality 
cases. The (essential) perimeter of a compact set 
decreases strictly under Steiner symmetrization in most, but not necessarily
all directions $u\in\Sn$, unless the set is a ball~\cite{CCF}.
Steiner symmetrization in an arbitrary direction strictly decreases
the moment of inertia, unless the set is already reflection 
symmetric up to a null set. 

Recently, several authors have studied how a sequence
of Steiner symmetrizations can fail to converge to the ball.
This may happen, even if the sequence
of directions is dense in $\Sn$.
Steiner symmetrizations of a convex body along any
dense sequence of directions can be made to converge
or diverge just by re-ordering~\cite{BKLYZ},
and
any given sequence of Steiner
symmetrizations (convergent or not) can be realized
as a subsequence of a non-convergent
sequence~\cite[Proposition 5.2]{BF}. 

In contrast, a sequence of Steiner symmetrizations
that uses only finitely many distinct directions always
converges~\cite{Klain}. 
The limit may be symmetric under
all rotations or under a non-trivial subgroup, depending
on the algebraic properties of those directions
that appear infinitely often.

A number of authors have studied Steiner
symmetrizations along random sequences of
directions. If the directions are chosen independently, uniformly
at random on the unit sphere, then the corresponding sequence of
Steiner symmetrals converges almost surely
to the ball simultaneously for all choices of the
initial set~\cite{Mani,vS,Volcic}.
Others have investigated the rate of convergence
of random and non-random sequences~\cite{BG,Klartag,Fortier,BF}.

We will address several questions that were
raised in these recent papers. The examples of
non-convergence presented there
use sequences of Steiner symmetrizations
along directions where the differences between successive angles are square summable.
Our main result, Theorem~\ref{main_theorem}, says that 
such sequences will converge if the Steiner symmetrizations
are followed by suitable rotations.  Convergence occurs both 
in Hausdorff distance and in symmetric difference.  The limit is
typically not an ellipsoid (or a convex set) unless the
sequence starts from an ellipsoid (or a convex
set, respectively). Some relevant examples and the statement of 
the theorem are contained in Section~\ref{section2}.

The proof of the theorem poses two technical challenges:
to show convergence of a sequence of symmetrals to
an unknown limit, rather than a ball;
and to show convergence in Hausdorff distance 
for an arbitrary compact initial set. 
This is more delicate than convergence 
in symmetric difference, because Steiner symmetrization 
is not continuous and Lebesgue measure is only upper 
semicontinuous on compact sets 
(see for instance \cite[p. 170]{Gru-book}). 

In Section~\ref{section3}, we collect the tools
to address these challenges.
Lemma~\ref{lem:symmdiff} relates
convergence of a sequence of compact sets 
in Hausdorff distance to convergence 
of their parallel sets in symmetric difference.
For sequences of Steiner symmetrals,
convergence in Hausdorff distance 
implies convergence in symmetric difference; 
in particular, the limit has the same measure as the initial set. 
To address the geometric problem of identifying
the limits of convergent subsequences, 
we use functionals of the form 
$$\calI_p(K)=\int_{\R^n} \phi(|x-p|)\, \psi(\dist(x,K))\, dx\,,$$
where $K$ is a compact set, $p$ a point in $\R^n$, the function
$\phi$ is increasing, and $\psi$ is decreasing. 
Then $\calI_p(K)$ decreases under simultaneous Steiner 
symmetrization of $K$ and $p$.
Note that by setting $p=o$,
$\phi(t)=t^2$ for all $t$, $\psi(0)=1$, 
and $\psi(t)=0$ for $t>0$, we recover the classical 
inequality for the moment of inertia.

We consider the special case where
$\phi$ and $\psi$ are the characteristic functions
of $[r,\infty)$ and $[0,\delta]$, respectively,
see~\eqref{mono}. 
Lemma~\ref{lem:one_three} implies that
for every pair of strictly monotone 
functions $\phi$ and $\psi$, the
functional decreases strictly
unless $K$ and $p$ agree with their Steiner symmetrals
up to a common translation.
By allowing $p\ne o$, we obtain information
about the intersection of the limiting
shape with a family of non-centered balls and half-spaces.
Lemma~\ref{lem:Radon} implies that
these intersections uniquely determine the shape.

In Section~\ref{section4}, we
combine the three lemmas to prove Theorem~\ref{main_theorem}.
It will be apparent from the proof that similar results 
should hold for other classical rearrangements. 
Lemmas~\ref{lem:symmdiff} and~\ref{lem:one_three}
remain valid for every rearrangement that
satisfies~\eqref{simm_and_setminus} and \eqref{simm_and_nbhd},
including the entire family
of cap symmetrizations studied by van Schaftingen~\cite{vS},
in particular polarization, spherical symmetrization,
and the Schwarz rounding process.

Lemmas~\ref{lem:symmdiff} and~\ref{lem:one_three}
are also useful for establishing convergence 
of Steiner symmetrals in Hausdorff distance 
in other situations, without the customary convexity 
assumption on the initial set.
In the remaining two sections,
we illustrate this with two more examples and 
pose some open questions. 

In Section~\ref{section5},
we consider Steiner symmetrization in the plane
along non-random sequences of directions that are
uniformly distributed (in the sense of Weyl) on $S^1$,
a property more restrictive than being dense.  
Theorem~\ref{teo:kronecker} shows that
a sequence of Steiner symmetrals along a Kronecker 
sequence of direction always converges  to a ball.
In the opposite direction, we give examples 
where convergence to a ball fails for certain uniformly 
distributed sequences. 

Finally, Section~\ref{section6} is dedicated to a recent result of Klain~\cite{Klain} on Steiner symmetrization along sequences chosen from a finite set of directions.
Klain proves that when $K$ is a convex body the sequence of Steiner symmetrals always converges. 
We extend this result to compact sets.

A.~Burchard wishes to thank F.~Maggi for inviting her to the University of Florence in the Spring of 2011, which led to this collaboration.

\section{Main Results}\label{section2}

We start with some definitions.  Let $o$ denote the origin in $\R^n$.
For $p\in\R^n$ and $r>0$, let $B_{r,p}$ denote the closed  ball of radius $r$ 
centered at $p$. If
$p=o$, we drop the second subscript and write simply $B_r$.
We write $\volk$ for the Lebesgue measure on $k$-dimensional 
subspaces of $\R^n$.
Directions in $\R^n$ are identified with unit vectors
$u\in \Sn$, and $u^{\perp}$ refers to the $(n-1)$-dimensional
subspace orthogonal to $u$. 

Let $K$ and $L$ be compact sets in $\R^n$. For $\de>0$, we denote by
$K_\delta=K+B_\delta$ the \emph{outer parallel set} of $K$.
The distance between compact sets will be measured
in the \emph{Hausdorff metric}, defined by
\[
d_H(K,L)=\inf \bigl\{\delta > 0 \mid
K\subset L_\delta \mbox{\,\,and\,\,} L \subset K_\delta\bigr\}\,.
\]
Another measure of the distance between $K$ and $L$
is their {\em symmetric difference distance} defined as
$\voln(K\De L)$. Note that this distance function
does not distinguish between sets that agree up to 
a null set. We will say that a sequence
of compact sets $(K_m)$ {\em converges 
in symmetric difference} to a compact set $L$, if
$$
\lim_{m\to\infty} \voln(K_m\De L)=0\,.
$$

Given a direction $u\in\Sn$, let $\calS_uK$ denote
the \emph{Steiner symmetral} of $K$ along $u$.  The
mapping $S_u$ that sends each set to its symmetral
is called \emph{Steiner symmetrization}.
We use here the variant that maps compact sets to compact sets,
which is defined as follows.
Denote by $\ell_y$  the line parallel to $u$ through
the point $y\in u^\perp$.  If $\ell_y\cap K$ is non-empty,
then $\ell_y\cap \calS_uK$ is the closed line segment of the same
one-dimensional measure centered at $y$;
if the measure is zero, the line segment degenerates to
a single point.  Otherwise,
$\ell_y$ intersects neither $K$ nor $\calS_u K$. 
Clearly, $\calS_u K$ is symmetric under reflection at $u^\perp$.

By Cavalieri's principle, the 
symmetral $\calS_uK$
has the same Lebesgue measure as the original set $K$. 
It is well known that Steiner symmetrization preserves convexity,
compactness, and connectedness, and that it
respects inclusions and 
reduces perimeter.  
For more information about Steiner symmetrization, we 
refer the reader to the book of Gruber~\cite[Chapter 9]{Gru-book}.

We have that $\calS_u (K\cap L)\subset \calS_u K\cap \calS_u L$,
since $\calS_u$ respects inclusions.
By writing $\voln(\calS_u K\setminus \calS_u L)$ as $\voln(\calS_u K)-\voln(\calS_uK\cap \calS_uL)$, this inclusion relation implies 
\begin{equation}\label{simm_and_setminus}
\voln(\calS_u K\setminus \calS_u L)\leq \voln(K\setminus L),
\end{equation}
an inequality that we repeatedly use.
It also implies that
$\voln (\calS_uK\De \calS_uL)\leq \voln (K\De L)$,
which means that
Steiner symmetrization is continuous in the symmetric 
difference metric
on the space of compact sets modulo null sets.

Moreover $\calS_u K+\calS_u L\subset \calS_u\left(K+L\right)$,
see \cite[Proposition 9.1(iii)]{Gru-book}. 
This in particular implies that for $\de>0$
\begin{equation}\label{simm_and_nbhd}
\left( \calS_u K \right)_\de\subset\calS_u K_\de.
\end{equation}

The following observation motivates our main result.

\begin{example}[Non-convergence]
\label{example_non_convergence}
Let $(\al_m)$  be a sequence  in $(0,\pi/2)$ with
\[
\sum_{m=1}^\infty\al_m=\infty\,,\qquad 
\sum_{m=1}^\infty\al_m^2<\infty\,,
\]
and set $\ga=\prod_{m=1}^\infty\cos\al_m$. Note that $\ga\in(0,1)$.
For each positive integer $m$, let $\be_m=\sum_{k=1}^m\al_k$
and $u_m=(\cos\be_m,\sin\be_m)$. 

Let $K$ be a convex body that has area smaller than a disc of 
diameter $\gamma$ and contains a vertical line 
segment $\ell$ of length~1.  Apply the sequence of 
Steiner symmetrizations $S_{u_m}$ to $K$ and $\ell$
to obtain a sequence of convex bodies $K_m$ and line segments $\ell_m$.
Each symmetrization $\calS_{u_m}$
projects the previous line segment $\ell_{m-1}$
onto $u_m^\perp$, thereby multiplying its
length by $\cos \al_m$. Since $\be_m$ diverges,
the segments $\ell_m$ spin in circles forever while 
their length decreases monotonically to $\gamma$.

For each $m$, the diameter of $K_m$
exceeds $\gamma$, because $K_m\supset \ell_m$.
If the sequence converges, its
limit must contain a disc of diameter $\gamma$.
On the other hand its area equals that of $K$, a contradiction.
\hfill$\Box$
\end{example}

It turns out that the sequences from
Example~\ref{example_non_convergence}
\emph{converge in shape}, in the sense that there exist 
isometries $\calI_m$ such that 
$(\calI_m \calS_{u_m}\dots \calS_{u_1}K)$ converges for each compact set $K$. The sequence $(\calI_m)$ depends only on $(u_m)$.

\begin{theorem}\label{main_theorem}
Let  $(u_m)$ be a sequence  in $\Sn$
with $u_{m-1}\cdot u_m=\cos\al_m$, 
where $(\al_m)$  is a sequence  in $(0,\pi/2)$
that satisfies
$\sum_{m=1}^\infty\al_m^2<\infty$.
There exists a sequence of rotations $(\calR_m)$
such that for every non-empty compact set $K\subset\R^n$,
the rotated symmetrals
\begin{equation}\label{rotated_sequence}
 K_m=\calR_m\calS_{u_m}\dots \calS_{u_1}K
\end{equation}
converge in Hausdorff distance to a compact set~$L$.  Moreover,
$$
\lim_{m\to\infty} \voln(K_m\De L)=0\,.
$$
\end{theorem}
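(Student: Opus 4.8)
The plan is to exploit the three lemmas announced in the introduction. Since $\sum \al_m^2 < \infty$, the partial sums of the squared angles are controlled, and I would first construct the rotations $\calR_m$ so that the direction vectors $\calR_m u_m$ converge in $\Sn$ to a fixed direction, say $e_n$; indeed, the increments $\al_m \to 0$ with square-summable tail, so the unit vectors $u_m$ form a Cauchy-like sequence on the sphere up to a rotation that I can pin down from $(u_m)$ alone. The key point is that with this choice, $\calR_m \calS_{u_m} = \calS_{\calR_m u_m} \calR_m$ is an honest symmetrization in an almost-fixed direction composed with a rotation, and the composition $\calR_m \calS_{u_m}\dots\calS_{u_1}$ differs from $\calR_{m-1}\calS_{u_{m-1}}\dots\calS_{u_1}$ by a single symmetrization in a direction that moves by only $O(\al_m)$ plus a small rotation. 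The square-summability is what makes these perturbations summable in a suitable sense.

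Next I would establish precompactness and extract a convergent subsequence. All the $K_m$ live in a fixed ball (Steiner symmetrization does not increase the circumradius about the reflection hyperplane, and a short argument bounds everything uniformly), so by Blaschke's selection theorem a subsequence $K_{m_j}$ converges in Hausdorff distance to some compact $L$. Here I invoke Lemma~\ref{lem:symmdiff} to upgrade: for Steiner symmetrals, Hausdorff convergence along the subsequence forces convergence in symmetric difference, so $\voln(L) = \lim \voln(K_{m_j}) = \voln(K)$, and moreover the parallel sets converge in symmetric difference. The heart of the matter is to show the \emph{whole} sequence converges and the limit is independent of the subsequence. For this I use the monotone functionals $\calI_p$. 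Applying a symmetrization in direction $u$ to $K_{m-1}$ and simultaneously to the point $p$ decreases $\calI_p$; the rotations $\calR_m$ only rigidly move the pair $(K_{m-1}, p)$ and hence (choosing $\phi, \psi$ radial about a moving center that we rotate along) leave $\calI_{\calR_m p}(\calR_m K_{m-1})$ unchanged. So along the sequence, $\calI_{p_m}(K_m)$ is non-increasing for an appropriate orbit of points $p_m$; being bounded below it converges, and the decrements — which by Lemma~\ref{lem:one_three} control how far $\calS_{\calR_m u_m} K_{m-1}$ is from $K_{m-1}$ up to translation — are summable. This squeezes the directional symmetrizations to become asymptotically trivial in the limiting shape.

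The main obstacle, and where I would spend the most care, is passing from ``every convergent subsequence has a limit that is symmetric under reflection in $e_n^\perp$ and against which further symmetrization in nearby directions does essentially nothing'' to ``all such limits coincide.'' Here Lemma~\ref{lem:Radon} is the decisive tool: by running the argument with $\calI_p$ for $p \ne o$, the vanishing of the decrements in the limit tells us that $L$ (up to translation) is invariant under symmetrization in direction $e_n$, and more: the family of intersections of $L$ with translated balls $B_{r,p}$ and half-spaces is pinned down by the common limiting value of the functionals, and a Radon-transform-type injectivity statement recovers $L$ uniquely from this data. The delicate technical point throughout is that Steiner symmetrization is not continuous and $\voln$ is only upper semicontinuous on compact sets, so I must consistently argue at the level of parallel sets via Lemma~\ref{lem:symmdiff} — proving Hausdorff convergence of $(K_m)$ rather than merely of $(K_m)_\de$ requires knowing the limit cannot suddenly grow, which is exactly what the measure-preservation (forced by Lemma~\ref{lem:symmdiff}) rules out. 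Once uniqueness of the subsequential limit is in hand, a standard subsequence-of-subsequence argument gives convergence of the full sequence $(K_m)$ in Hausdorff distance to $L$, and Lemma~\ref{lem:symmdiff} then delivers $\voln(K_m \De L) \to 0$.
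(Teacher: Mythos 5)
Your proposal follows the same high-level route as the paper: pick rotations so the current symmetrization direction is pinned to a fixed axis, extract a Hausdorff-convergent subsequence by Blaschke, track a family of monotone functionals $\voln\bigl((K_m)_\de\setminus B_{r,p_m}\bigr)$ along a co-moving orbit of centers $p_m$, pass the (monotone, hence subsequence-independent) limiting values to the subsequential limits of $(K_m)$, and invoke the Radon-transform injectivity of Lemma~\ref{lem:Radon} to identify all subsequential limits. Lemma~\ref{lem:symmdiff} is correctly used to move between Hausdorff and symmetric-difference convergence via parallel sets. Two points deserve correction or sharpening. First, Lemma~\ref{lem:one_three} is \emph{not} used in the paper's proof of Theorem~\ref{main_theorem} (it only enters Theorems~\ref{teo:kronecker} and~\ref{teo:klain}), and your invocation of it as a quantitative ``control on how far $\calS_{\calR_m u_m}K_{m-1}$ is from $K_{m-1}$'' overstates the lemma: it is a pure rigidity statement (equality of the functionals for all $\de,r$ forces $\calS_uK=K$), not a stability estimate. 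More importantly, your ``appropriate orbit of points $p_m$'' is a genuine gap: for the Radon argument you need~\eqref{semispazi} for \emph{every} $q\in e_1^\perp$, which requires that for each such $q$ you can choose a starting point whose orbit under the maps $\calR_m'\calS_{\calR_{m-1}u_m}$ converges exactly to $q$. The paper does this by identifying the restriction of $\calR_m'\calS_{\calR_{m-1}u_m}$ to $e_1^\perp$ with a linear contraction $\calT_m'$, showing that the products $\calT_m=\calT_m'\cdots\calT_1'$ converge to a linear map $\calT$ with $|\calT x|\ge\gamma|x|$ where $\gamma=\prod\cos\alpha_m>0$ (this is precisely where square-summability of $(\alpha_m)$ is spent), and then taking $p_m=\calT_m\calT^{-1}q$. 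Without the invertibility of $\calT$ the limiting centers only sweep out a proper subspace of $e_1^\perp$, the half-space data in~\eqref{semispazi} is incomplete, and Lemma~\ref{lem:Radon} cannot be applied. Also, a small point: the paper's rotations make $\calR_m u_m=e_1$ exactly for every $m$, not merely convergent to a fixed direction.
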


\medskip
What can we say about the limit  
of the sequence $(K_m)$ in \eqref{rotated_sequence}?
Since Steiner
symmetrization transforms ellipsoids into ellipsoids, 
one may wonder whether the 
limit is always an ellipsoid~\cite{BKLYZ,Klain}.
The following examples show that this is not the case.

\begin{example}[The limit need not be an ellipse]\label{limit_non_ellipse}
Let the sequence $(u_m)$ and $\ga=\prod_{m=1}^\infty\cos\al_m$ be as in 
Example~\ref{example_non_convergence}. Observe 
that dropping, if necessary, a few initial terms we can make 
$\sum_{m=1}^\infty\al_m^2$ arbitrarily small and 
hence $\gamma$ arbitrarily close to 1.
In particular we may suppose  $\gamma>2/\pi$.  
Let $K$ be the convex envelope of the line segment $\ell$ from
Example \ref{example_non_convergence} 
and a centered ball $B_r$ for some $r>0$. If
the sequence  \eqref{rotated_sequence} converges, 
its limit contains both $B_r$ and a line segment 
of length $\gamma$.  Any ellipse that contains these sets has area 
at least $\pi \ga r/2$.  On the other hand  its area agrees 
with the area of $K$, which is bounded from above by $r/\sqrt{1-4r^2}$,
the area of the rhombus circumscribed to the circle centered at $o$ 
whose longer diagonal is a
segment of length $1$. Since this is smaller than
$\pi \ga r/2$ if $r$ is small enough, by our choice of $\gamma$, 
the limit cannot be an ellipse.
\hfill $\Box$
\end{example}

\begin{example}[The limit can be non-convex]
\label{limit_non_convex}
Take the sequence $(u_m)$ as in Example~\ref{example_non_convergence},
and let $K$ be the union of a line segment $\ell$ and a
ball $B_r$.  The limit of the sequence \eqref{rotated_sequence} 
contains $B_r$ and a line segment of length $\gamma$. 
Any convex set that contains these sets has area at least 
$\ga r/2$.  Since the area of $K$ is $\pi r^2$, 
the limit cannot be a convex set if $\pi r<\ga/2$.
\hfill $\Box$
\end{example}

\section{Some lemmas} \label{section3}

Our first lemma relates convergence in the Hausdorff 
metric to convergence in symmetric difference.

\begin{lemma}
\label{lem:symmdiff} Let $L$ and $K_m$, $m\geq1$,  
be  non-empty compact sets. 
\begin{enumerate}
 \item\label{lem:symmdiff_i} The sequence $(K_m)$ converges in 
Hausdorff distance to $L$ if and only if 
$$
 \lim_{m\to\infty} \voln((K_m)_\de\De L_\de)=0\
$$
for each $\de>0$.

\item\label{lem:symmdiff_ii} If $(K_m)$ converges
in Hausdorff distance to $L$ and 
each $K_m$ is obtained  from a compact set $K$ via 
finitely many Steiner symmetrizations and 
Euclidean isometries, then
$$
 \lim_{m\to\infty} \voln(K_m\De L)=0\,.
$$
In particular, $\voln(L)=\voln(K)$.
\end{enumerate}
\end{lemma}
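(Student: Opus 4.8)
\medskip

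The plan is to prove the two parts separately, with part~\eqref{lem:symmdiff_i} providing the main bridge between the Hausdorff metric and symmetric difference, and part~\eqref{lem:symmdiff_ii} exploiting the special structure of Steiner symmetrals to upgrade convergence of parallel sets to convergence of the sets themselves.

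\medskip

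For part~\eqref{lem:symmdiff_i}, I would first recall the elementary fact that $d_H(K_m,L)\to 0$ is equivalent to $d_H((K_m)_\de,L_\de)\to 0$ for every (equivalently, some) $\de>0$, since $(K_\de)_\eps=K_{\de+\eps}$ and the Hausdorff distance between parallel sets is controlled by the Hausdorff distance between the original sets. So it suffices to show, for a \emph{fixed} $\de>0$, that a sequence of parallel sets $(K_m)_\de$ converges to $L_\de$ in Hausdorff distance if and only if it converges in symmetric difference. The forward implication uses that each $(K_m)_\de$ and $L_\de$ is a body with a uniform ``inner cone condition'' (it contains a ball of radius $\de$ around each of its points), so that the measure of a thin Hausdorff-neighborhood shell is uniformly small: if $d_H((K_m)_\de,L_\de)<\eps$ then $(K_m)_\de\subset (L_\de)_\eps=L_{\de+\eps}$ and $L_\de\subset (K_m)_{\de+\eps}$, and one bounds $\voln(L_{\de+\eps}\setminus L_\de)$ and $\voln((K_m)_{\de+\eps}\setminus (K_m)_\de)$ by a modulus that tends to $0$ with $\eps$, uniformly in $m$ --- here one can use that all the $(K_m)_\de$ lie in a fixed large ball (convergence in $d_H$ forces boundedness) and that the map $\eps\mapsto\voln(A_\eps)$ has a uniform modulus of continuity on such a family via Steiner's formula or a direct covering argument. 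The reverse implication is the delicate one and the step I expect to be the main obstacle: convergence in symmetric difference does \emph{not} in general imply Hausdorff convergence, so one must use that $(K_m)_\de=L_\de+(\text{small})$ in a measure-theoretic sense together with the fact that each of these sets is a full-dimensional ``fat'' set. The idea is: if $d_H((K_m)_\de,L_\de)\not\to 0$, then along a subsequence there is $\eps_0>0$ and points realizing the excess; because these sets contain a $\de$-ball around each point, a point at Hausdorff-distance $\eps_0$ from the other set carries a definite amount of symmetric-difference measure (at least the volume of a half-ball of radius $\min(\de,\eps_0)$, say), contradicting convergence in symmetric difference. One must take care that the subsequence argument is run on both sides of the Hausdorff distance (points of $(K_m)_\de$ far from $L_\de$, and points of $L_\de$ far from $(K_m)_\de$).

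\medskip

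For part~\eqref{lem:symmdiff_ii}, the hypothesis is that $K_m$ is obtained from $K$ by finitely many Steiner symmetrizations and isometries. Isometries preserve both $\voln$ and $d_H$, so they are harmless; the real content is that Steiner symmetrization does not increase Lebesgue measure (in fact preserves it by Cavalieri) and does not increase the symmetric-difference distance, by~\eqref{simm_and_setminus}. Consequently $\voln(K_m)=\voln(K)$ for all $m$, while by upper semicontinuity of Lebesgue measure on compact sets under Hausdorff convergence, $\voln(L)\ge\limsup_m\voln(K_m)=\voln(K)$; I would also need the reverse inequality $\voln(L)\le\voln(K)$, which follows because by part~\eqref{lem:symmdiff_i} the parallel sets converge in measure, hence $\voln(L_\de)=\lim_m\voln((K_m)_\de)$, and letting $\de\downarrow 0$ and using $\voln((K_m)_\de)\to\voln(K_m)=\voln(K)$ with a uniform-in-$m$ rate (again from the fatness/Steiner-formula bound on the shell $\voln((K_m)_\de\setminus K_m)$, using that $K_m$ has volume $\voln(K)$ and lies in a fixed ball) gives $\voln(L)\le\voln(K)$. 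Thus $\voln(L)=\voln(K)$. Finally, to get $\voln(K_m\De L)\to 0$: from part~\eqref{lem:symmdiff_i}, $(K_m)_\de\to L_\de$ in symmetric difference for each $\de$; writing $\voln(K_m\De L)\le\voln((K_m)_\de\De L_\de)+\voln((K_m)_\de\setminus K_m)+\voln(L_\de\setminus L)$ and using that the last two terms are small (the first uniformly in $m$, by the shell bound together with $\voln(K_m)=\voln(K)$; the second because $\voln(L_\de)\downarrow\voln(L)$), one sends $m\to\infty$ and then $\de\downarrow 0$ to conclude. The one genuinely new ingredient beyond part~\eqref{lem:symmdiff_i} is the equi-smallness of the shells $\voln((K_m)_\de\setminus K_m)$ as $\de\downarrow 0$, uniformly in $m$; this is where the volume-preservation of Steiner symmetrization (so that the $K_m$ have a common volume and a common bounding ball) is essential, since for a general sequence of compact sets the inner shell need not shrink uniformly.
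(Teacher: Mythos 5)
Your overall plan matches the paper's, and part~(i) is essentially sound (the forward direction could be simplified: the paper estimates both $\voln((K_m)_\de\setminus L_\de)$ and $\voln(L_\de\setminus(K_m)_\de)$ against shells of $L$ alone via the inclusions $L_{\de-\rho}\subset (K_m)_\de\subset L_{\de+\rho}$, so no uniform-in-$m$ modulus for the sets $(K_m)_\de$ is needed; also, a parallel set $A_\de$ does not contain a $\de$-ball \emph{around} each of its points, only \emph{near} each of its points, which is what you actually use in the reverse direction).

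The genuine gap is in part~(ii). You correctly isolate the crucial ingredient — the equi-smallness of the shells $\voln((K_m)_\de\setminus K_m)$ as $\de\downarrow 0$, uniformly in $m$ — but your justification for it does not work. You claim this follows because the $K_m$ all have the same volume $\voln(K)$ and lie in a common bounding ball. For general compact sets this is false: one can construct sequences of compact sets with fixed volume in a fixed ball (e.g.\ finer and finer unions of tiny balls) for which $\sup_m\voln((K_m)_\de\setminus K_m)$ stays bounded away from $0$ as $\de\downarrow 0$. A Steiner-formula bound is available for convex bodies, but the whole point of the lemma is to handle arbitrary compact sets. The ingredient you are missing is precisely~\eqref{simm_and_nbhd}, i.e.\ $(\calS_u K)_\de\subset\calS_u(K_\de)$, which you never invoke (you only cite~\eqref{simm_and_setminus}). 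Iterating~\eqref{simm_and_nbhd}, and using that isometries commute with forming parallel sets, one gets $(K_m)_\de\subset(K_\de)_m$, where $(K_\de)_m$ is obtained by applying the \emph{same} sequence of symmetrizations and isometries to $K_\de$. Since symmetrization and isometries preserve volume, this yields
\[
\voln\bigl((K_m)_\de\setminus K_m\bigr)\le\voln\bigl((K_\de)_m\bigr)-\voln(K_m)=\voln(K_\de)-\voln(K)\,,
\]
a bound that is uniform in $m$ and tends to $0$ as $\de\downarrow 0$. This is exactly the estimate the paper uses; without~\eqref{simm_and_nbhd}, the proof of part~(ii) does not close.
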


\begin{proof} For Claim (i), assume that $K_m$ 
converges to $L$ in Hausdorff distance.
Fix $\de>0$, and let $\eps>0$ be given.
Since $L$ is compact, $\voln(L_\rho)$ is continuous in 
$\rho>0$.  Choose $\rho\in (0,\de)$ so small that 
$\voln(L_{\de+\rho})-\voln(L_\de)<\eps$ 
and $\voln(L_\de)-\voln(L_{\de-\rho})<\eps$,
and let $m$ be so large that $d_H(K_m,L)<\rho$.
Then $d_H((K_m)_\de,L_\de)<\rho$ for each $\de>0$.
It follows that
$$
\voln\left((K_m)_\de\setminus L_\de \right) \leq 
 \voln(L_{\de+\rho}\setminus L_\de ) <\eps
$$
since $(K_m)_\de\subset L_{\de+\rho}$, and
$$
\voln\left(L_\de\setminus(K_m)_\de \right)\leq  
\voln\left(L_{\de}\setminus L_{\de-\rho} \right)<\eps
$$
since $L_{\de-\rho}\subset (K_m)_\de$.
Combining the two inequalities, we obtain that
$$\voln((K_m)_\de\De L_\de))<\eps$$
for $m$ sufficiently large. Since 
$\eps>0$ was arbitrary, convergence in symmetric difference follows.

To see the converse implication, assume that
$d_H(K_m,L)\geq 2\rho>0$. If
$K_m\setminus L_{2\rho}\neq\emptyset$ 
then $(K_m)_\rho\setminus L_\rho$ 
contains $B_{\rho,p}$, where $p$ is any point in 
$K_m\setminus L_{2\rho}$. Otherwise, $L_\rho\setminus (K_m)_\rho$ 
contains $B_{\rho,p}$, where $p$ is any point in 
$L\setminus (K_m)_{2\rho}$.
In either case, $\voln((K_m)_\rho\De L_\rho)\geq \voln(B_\rho)$.

For Claim (ii),
assume that $(K_m)$ converges to $L$ in
Hausdorff distance. Given $\eps>0$, choose $\rho>0$ so small 
that $\voln(L_{\rho})-\voln(L)<\eps$, and choose $m$
so large that $d_H(K_m,L)<\rho$.
Then $K_m\subset L_\rho$, and therefore
$$
\voln\left(K_m\setminus L \right) \leq 
 \voln(L_{\rho}\setminus L ) <\eps\,.
$$
For the complementary inequality, construct $(K_\rho)_m$ by 
applying the same sequence of symmetrizations and isometries 
to the parallel set $K_\rho$ that was used to produce $K_m$. Then, 
by \eqref{simm_and_nbhd} and since symmetrization does 
not change volume,  we have
\begin{align*}\label{haus_simmdif3}
 \voln\left(L\setminus K_m \right)&\leq  
 \voln\left((K_m)_{\rho}\setminus K_m \right)\\
&\leq\voln\left((K_\rho)_{m}\right)-\voln\left(K_m \right) \\
&=\voln\left(K_{\rho}\right)-\voln\left(K \right)\\
&<\eps\,.
\end{align*}
Combining the two preceding inequalities, we
conclude as in the proof of the first claim
that $\voln(K_m\De L)$ converges to zero.
\end{proof}

\medskip
The second part of Lemma~\ref{lem:symmdiff} could have been
proved by using \eqref{simm_and_nbhd} and a  result of 
Beer~\cite[Theorem 1]{Be2}, 
who also, in~\cite[Lemma 4]{Be1}, proved the ``only if'' 
implication of the first part of the lemma.

The next lemma provides an equality statement
for~\eqref{simm_and_setminus} in the case where one
set runs through the family of parallel sets
$K_\de$ and the other set is a ball $B_{r,p}$.

\begin{lemma} 
\label{lem:one_three}
Let $u\in\Sn$, and let $K$ be a non-empty compact set. 
If there exists a point $p\in u^\perp$ such that
\begin{equation}\label{mono-2}
\voln( \calS_u K_\de \setminus B_{r,p})
= \voln(K_\de\setminus B_{r,p})
\end{equation}
for all $\de,r>0$, then $\calS_uK=K$.
\end{lemma}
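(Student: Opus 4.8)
The plan is to deduce from \eqref{mono-2}, valid for all $\de,r>0$, that the slices of every parallel set $K_\de$ are centered segments up to a null set, and then to strip off the null sets by letting $\de\downarrow0$. First I would reduce to $p=o$: translating $K$ by $-p$ inside $u^\perp$ commutes with $\calS_u$ (which acts fibrewise along $u$), turns $K_\de$ into $(K-p)_\de$, turns $B_{r,p}$ into $B_r$, and turns the conclusion $\calS_uK=K$ into the corresponding statement for $K-p$; so we may assume all balls are centered at the origin. Since Steiner symmetrization preserves volume, \eqref{mono-2} is then equivalent to $\voln(\calS_uK_\de\cap B_r)=\voln(K_\de\cap B_r)$ for all $r,\de>0$. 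Identifying $\ell_y$ with $\R$ and writing $A_y=K_\de\cap\ell_y$, $m_y=\len(A_y)$, $\rho_y(r)=\sqrt{(r^2-|y|^2)_+}$, and using that the slices of $B_r$ are centered intervals, this reads
\[
\int_{u^\perp}\min\!\bigl(m_y,\,2\rho_y(r)\bigr)\,dy=\int_{u^\perp}\len\!\bigl(A_y\cap[-\rho_y(r),\rho_y(r)]\bigr)\,dy .
\]
The left integrand dominates the right one pointwise (deleting from $A_y$ a centered interval of length $2\rho$ removes measure at most $\min(m_y,2\rho)$), so equality of the integrals forces equality of the integrands for a.e.\ $y$. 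Running $r$ through a countable dense set, so that $\{\rho_y(r):r>|y|\}$ becomes dense in $(0,\infty)$, one gets for a.e.\ $y$ and for radii $\rho\uparrow m_y/2$ that $\len(A_y\cap[-\rho,\rho])=2\rho$; then $[-\rho,\rho]\setminus A_y$ is a relatively open null set, hence empty, so $[-m_y/2,m_y/2]\subset A_y$ by closedness, and since $\len(A_y)=m_y$ the set $A_y$ agrees with the centered segment $\calS_uK_\de\cap\ell_y$ up to a null set. Integrating over $y$ yields $\voln\bigl(K_\de\,\De\,\calS_uK_\de\bigr)=0$ for every $\de>0$.

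Next I would prove $K\subset\calS_uK$. Fix $(y_0,t_0)\in K$ and $\de>0$. For every $y\in u^\perp$ with $|y-y_0|<\de$, the point $(y_0,t_0)$ lies within distance $\de$ of the whole segment $\{y\}\times[t_0-c_y,\,t_0+c_y]$, where $c_y=\sqrt{\de^2-|y-y_0|^2}>0$, so that segment is contained in $K_\de\cap\ell_y$. By the identity just established, for a.e.\ such $y$ the slice $K_\de\cap\ell_y$ is a centered segment up to a null set; a genuine interval contained in it must actually lie inside that centered segment, whence $\len(K_\de\cap\ell_y)\ge 2(|t_0|+c_y)\ge 2|t_0|$. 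Now $y\mapsto\len(K_\de\cap\ell_y)=\len\{t:\dist((y,t),K)\le\de\}$ is upper semicontinuous, since $(y,t)\mapsto\dist((y,t),K)$ is continuous and the sets $\{t:\dist((y,t),K)\le\de\}$ for $y$ near $y_0$ are contained in a common bounded interval; letting $y\to y_0$ through the a.e.\ set above gives $\len(K_\de\cap\ell_{y_0})\ge 2|t_0|$. Since $K_\de\cap\ell_{y_0}$ decreases to $K\cap\ell_{y_0}$ as $\de\downarrow0$, we get $\len(K\cap\ell_{y_0})\ge 2|t_0|$, i.e.\ $(y_0,t_0)\in\calS_uK$.

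To finish, I would upgrade $K\subset\calS_uK$ to equality. Because $\voln(\calS_uK)=\voln(K)$, for each $y$ with $K\cap\ell_y\ne\emptyset$ the slice $K\cap\ell_y$ is a closed subset of the centered segment $\calS_uK\cap\ell_y$ having the same one-dimensional measure; its complement in that segment is relatively open and null, hence empty, so $K\cap\ell_y=\calS_uK\cap\ell_y$. For $y$ with $K\cap\ell_y=\emptyset$ both slices are empty. Therefore $\calS_uK=K$.

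The delicate point is the second paragraph. Steiner symmetrization is not continuous and the slice-length functional on a bare compact set is not lower semicontinuous, so one cannot argue with $K$ directly; passing to the parallel set $K_\de$ is exactly what guarantees that an \emph{honest} interval shows up in the slices near $t_0$, and it is the \emph{upper} semicontinuity of $y\mapsto\len(K_\de\cap\ell_y)$ — the only semicontinuity available — that transports the slice bound from the neighbouring $y$ to $y_0$ itself. This is also the reason the hypothesis \eqref{mono-2} is phrased through the family $\{K_\de\}$ rather than through $K$.
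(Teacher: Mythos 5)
Your argument is correct, but it follows a genuinely different route from the paper's. The paper argues by contradiction: it fixes a single point $q\in K\setminus\calS_uK$, chooses $r<|p-q|$ so that $B_{r,p}$ contains $\calS_uK\cap\ell_y$ in its interior, uses nested compactness of the sets $A(\de)=\calS_uK_\de\cap(\ell_y)_\de$ to find a small $\de$ for which $A(\de)$ sits inside $B_{r,p}$ while $B_{\de,q}$ is disjoint from $B_{r,p}$, and then splits $\R^n$ into the slab $(\ell_y)_\de$ and its complement to obtain a \emph{strict} inequality that contradicts \eqref{mono-2}. By contrast, you normalize $p=o$, rewrite \eqref{mono-2} via Fubini as an equality of integrals over $u^\perp$, invoke the elementary one--dimensional inequality $\len(A_y\cap[-\rho,\rho])\le\min(m_y,2\rho)$ to get equality of the integrands for a.e.\ $y$ (for all $\rho$, after running through a countable dense set of $r$), and then close the gap at the exceptional $y$'s by combining (a) the honest inclusion $\{y\}\times[t_0-c_y,t_0+c_y]\subset K_\de\cap\ell_y$ for $y$ near $y_0$, (b) the upper semicontinuity of $y\mapsto\len(K_\de\cap\ell_y)$ for the closed parallel set $K_\de$, and (c) the monotone limit $\de\downarrow0$, giving $K\subset\calS_uK$, whence equality slice by slice because both slices are closed with the same length. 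The paper's proof is shorter and localizes everything to a single slab; yours is more measure-theoretic and, as a byproduct, establishes the stronger intermediate fact $\voln(K_\de\,\De\,\calS_uK_\de)=0$ for every $\de>0$. Your observation that upper semicontinuity is precisely the semicontinuity available for $K_\de$ — and why the hypothesis is phrased through the parallel sets — matches the spirit of the paper's use of nested compactness and is the essential idea in both approaches. One small remark: in your final step the equality $\len(K\cap\ell_y)=\len(\calS_uK\cap\ell_y)$ is a consequence of the definition of $\calS_u$, not of the global identity $\voln(\calS_uK)=\voln(K)$; the latter only yields the slice-length equality for a.e.\ $y$, which is not quite enough for the slice-by-slice argument you then run, so you should cite the definition directly.
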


\begin{proof}
Suppose $\calS_uK\ne K$, and fix
a point $q\in K\setminus \calS_uK$. Let $y\in u^\perp$
be such that the line  $\ell_y$ parallel to
$u$ and passing through $y$ contains $q$.  By definition,
$\calS_uK$ intersects $\ell_y$ in a centered line segment of 
the same one-dimensional measure as $K\cap\ell_y$.
Since $p\in u^\perp$, we can choose $r<|p-q|$ such that
$B_{r,p}$ contains $S_uK\cap \ell_y$
in its interior.

We argue that the boundary of $B_{r,p}$ separates
a neighborhood of $q$ from $S_uK\cap \ell_z$ for $z$ close
to $y$.  For $\de>0$, consider the nested compact sets 
\[
A(\de)=S_u K_\de\cap (\ell_y)_\de.
\]
By the compactness of $K$, the one-dimensional measure 
of each cross section $K_\de\cap \ell_z$ converges 
monotonically to the measure of 
$K\cap \ell_z$ as $\de$ tends to zero,
and hence $\bigcap_{\de>0}S_u K_\de =S_uK$.
It follows that
$$
\bigcap_{\de>0} A(\de) = 
\left( \bigcap_{\de>0} S_uK_\de\right) \cap 
\left(\bigcap_{\de>0} (\ell_y)_\de\right)  = \calS_uK\cap \ell_y\,.
$$
Since  $\calS_uK\cap \ell_y$ does not meet 
$\{x: |x-p|\ge r\}$, by compactness
there exists a set $A(\de)$ that does not meet 
$\{x:|x-p|\ge r\}$ either. This means that 
the interior of $B_{r,p}$ contains $\calS_uK_\de \cap (\ell_y)_\de$
for some $\de>0$.
By choosing $\de>0$ small enough, we can further assume that
$B_{r,p}$ does not intersect $B_{\de,q}$.

By construction,
\begin{equation}\label{part1}
 \voln\bigl(\bigl(\calS_u K_\de\cap (\ell_y)_\de\bigr) 
\setminus \calS_u B_{r,p}\bigr)
<\voln\bigl(\bigl(K_\de\cap (\ell_y)_\de\bigr)\setminus B_{r,p}\bigr),
\end{equation}
because the set on the left-hand side 
is empty, while the one on the right-hand side contains $B_{\de,q}$. 
Let $C$ be the closure of $\R^n\setminus (\ell_y)_\de$.
Since
$\calS_u K_\de\cap C =\calS_u (K_\de\cap C)$,
it follows from \eqref{simm_and_setminus} that
\begin{equation}\label{part2}
\voln((\calS_u K_\de\cap C) \setminus \calS_u B_{r,p})
\leq\voln((K_\de\cap C) \setminus B_{r,p})\,.
\end{equation}
Adding \eqref{part1} and \eqref{part2} and using
that $(\ell_y)_\de$ and $C$ form an almost disjoint
partition of $\R^n$, we obtain that
$$
\voln( \calS_u K_\de \setminus \calS_u B_{r,p})
< \voln(K_\de\setminus B_{r,p})\,,
$$
negating \eqref{mono-2}.
\end{proof}

For the proof of the main result, we will
combine \eqref{simm_and_setminus} with~\eqref{simm_and_nbhd} 
to obtain
\begin{equation}\label{mono}
\voln( (\calS_u K)_\de \setminus \calS_u B_{r,p})
\le \voln(K_\de\setminus B_{r,p})
\end{equation}
for each $p\in \R^n$ and all $\de,r>0$.
Lemma~\ref{lem:one_three} implies that
for every given $p\in u^\perp$, the inequality
in \eqref{mono} is strict for some $\delta,r>0$ unless $\calS_uK=K$.

The last lemma will be used to identify the
limit of \eqref{rotated_sequence}.

\begin{lemma} 
\label{lem:Radon}
Let $H_1,H_2$ be compact sets in $\R^n$, and let $u\in\Sn$. 
Assume that $\calS_{u}H_j=H_j$ for $j=1,2$, and that
\begin{equation} \label{semispazi}
\voln(H_1\cap\{x\cdot p>t\}) = 
\voln(H_2\cap\{x\cdot p>t\})
\end{equation}
for all non-zero $p\in u^\perp$ and all $t\in\R$.
Then $H_1$ and $H_2$ agree up to a set of $n$-dimensional
Lebesgue measure zero.
\end{lemma}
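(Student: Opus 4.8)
The hypothesis \eqref{semispazi} says that all hyperplane sections $H_j\cap\{x\cdot p=t\}$, as $p$ ranges over $u^\perp$ and $t$ over $\R$, carry the same ``mass'' in the sense that the half-space measures agree; equivalently, differentiating in $t$, the $(n-1)$-dimensional slice measures agree for almost every $t$. The plan is to reduce the claim to a Radon-transform (or Cavalieri-type) uniqueness statement. Write $f_j=\mathbf 1_{H_j}$. For every unit vector $p\in u^\perp$ the function
$$
t\mapsto \voln(H_j\cap\{x\cdot p>t\})
$$
is the ``cumulative'' Radon transform of $f_j$ in direction $p$, and \eqref{semispazi} asserts these agree for all such $p$. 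Hence the Radon transform $Rf_1=Rf_2$ on the set of hyperplanes whose normal lies in $u^\perp$, i.e. on all hyperplanes \emph{parallel to $u$}. Since $f_1-f_2$ is an integrable (indeed bounded, compactly supported) function whose Radon transform vanishes on every hyperplane containing the direction $u$, I want to conclude $f_1=f_2$ a.e. This is where the assumption $\calS_uH_j=H_j$ enters: each $H_j$ is symmetric under reflection in $u^\perp$ and, more importantly, each line $\ell_y$ parallel to $u$ meets $H_j$ in a (possibly degenerate or empty) centered segment, so $f_j$ restricted to $\ell_y$ is determined by a single number, its length $2g_j(y)$ with $g_j\ge0$ defined on $u^\perp$.

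With this structure the problem collapses to dimension $n-1$. First I would integrate out the $u$-direction: for a hyperplane $\{x\cdot p=t\}$ with $p\in u^\perp$, Fubini over the line direction $u$ gives, writing $x=y+su$ with $y\in u^\perp$,
$$
\voln(H_j\cap\{x\cdot p>t\})=\int_{\{y\in u^\perp:\,y\cdot p>t\}} 2g_j(y)\,d\voln[n-1](y).
$$
So \eqref{semispazi} is exactly the statement that the functions $2g_1$ and $2g_2$ on $u^\perp\cong\R^{n-1}$ have the same integral over every half-space of $\R^{n-1}$. Subtracting, $h:=2(g_1-g_2)$ is an integrable function on $\R^{n-1}$, supported in a fixed ball, with $\int_{\{y\cdot p>t\}}h=0$ for all $p\in S^{n-2}$ and all $t$; differentiating in $t$ shows the Radon transform of $h$ vanishes identically. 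By the injectivity of the Radon transform on $L^1$ functions with compact support (a classical fact — e.g. via the Fourier slice theorem, since $\widehat h$ vanishes on every line through the origin, hence everywhere, so $h=0$), we get $g_1=g_2$ a.e. on $u^\perp$. Finally, two sets that are Steiner-symmetric along $u$ and have a.e.-equal profile functions $g_1=g_2$ coincide up to a null set: the symmetric difference $H_1\De H_2$ meets a.e. line $\ell_y$ in a null (one-dimensional) set, so by Fubini $\voln(H_1\De H_2)=0$.

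The main obstacle is the passage from ``half-space measures agree'' to ``slice measures agree'' to ``profile functions agree'': one must be slightly careful that $g_j$ is a genuine integrable function (it is, because $H_j$ is compact, so $g_j$ is bounded and compactly supported, and measurable as the slice-length of a compact set), and that differentiating the half-space identity in $t$ is legitimate — this is fine since $t\mapsto\int_{\{y\cdot p>t\}}h$ is Lipschitz, hence differentiable a.e., with derivative the slice integral of $h$ over $\{y\cdot p=t\}$, and vanishing for \emph{all} $t$ forces the Radon transform to vanish everywhere by continuity. The only genuinely external input is the injectivity of the Radon transform for compactly supported $L^1$ functions; everything else is Fubini and the definition of Steiner symmetrization. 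I would cite a standard reference (e.g. Helgason or Natterer) for the Radon inversion step rather than reprove it.
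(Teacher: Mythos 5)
Your proof is correct and follows essentially the same route as the paper: reduce to the profile functions $f_j(y)=\lambda_1(\ell_y\cap H_j)$ on $u^\perp\cong\R^{n-1}$ via Fubini (using $p\in u^\perp$ so that the half-space in $\R^n$ is a cylinder over a half-space in $u^\perp$), differentiate the half-space identity in $t$ to see that the $(n-2)$-plane Radon transform of $f_1-f_2$ vanishes a.e., and invoke Radon-transform injectivity on compactly supported $L^1$ functions to get $f_1=f_2$ a.e., whence the Steiner-symmetric sets agree up to a null set. The only minor imprecision is the claim that the Radon transform ``vanishes everywhere by continuity'' — in general it only vanishes almost everywhere, but that suffices for the injectivity step, so the conclusion is unaffected.
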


\begin{proof} Denote by $\ell_y$ the line parallel to $u$
through $y\in u^\perp$, and consider on $u^\perp$ the
measurable functions $f_j(y)=\la_1(\ell_y\cap H_j)$
for $j=1,2$. 
By assumption, the difference $f_1-f_2$ integrates to zero
over every half-space $\{y\cdot p>t\}\subset u^\perp$. 
It follows from a standard argument
that its integral over
almost every $(n-2)$-dimensional subspace
$\{y\cdot p=t\}\subset u^\perp$ vanishes as well~\cite{Z}.
In other words, the $(n-2)$-dimensional Radon transform
of $f_1-f_2$ is zero almost everywhere, and 
therefore $f_1=f_2$ almost everywhere on $u^\perp$~\cite[p.28]{H}.
Since $\calS_uH_j=H_j$, the sets
are uniquely determined by the functions $f_j$,
and we conclude that $H_1$ and $H_2$ agree up to
a null set.
\end{proof}

\section{Proof of Theorem~\ref{main_theorem}}\label{section4}

We begin with some geometric
considerations.  Given $u\in\Sn$, we
want to compose a Steiner symmetrization $\calS_u$  
with a rotation $\calR'$ so that the result is symmetric at the 
hyperplane $e_1^\perp$. Note that the
commutation rule  
\begin{equation}\label{comm_rule}
 \calR\calS_u = \calS_{\calR u}\calR\,
\end{equation}
holds for every rotation $\calR \in O(n)$ and every $u\in\Sn$.  

Let $u\in \Sn$, with $u\cdot e_1=\cos\alpha$.
Replacing $u$ with $-u$, if necessary, we may take $\alpha\in (0,\pi/2)$. 
The Steiner symmetrization $\calS_u$ projects 
subsets of $e_1^\perp$ linearly onto $u^\perp$.
If $\calR'$ is the rotation
that maps $u$ to $e_1$ and fixes $u^\perp \cap e_1^\perp$,
then the composition $\calT=\calR'\calS_u$ defines
a linear transformation on $e_1^\perp$ that satisfies
\begin{equation} \label{T-bound}
|\calT x - x|\le (1-\cos\alpha)|x|\,,\quad
|x|\ge |\calT x|\ge |x| \cos\alpha 
\end{equation}
for all $x\in e_1^\perp$.
More precisely, the restriction of $\calT$ to $e_1^\perp$
is equivalent to a diagonal matrix with eigenvalues 
$\cos\alpha$ (simple) and $1$ (of multiplicity $n-2$).

Given a non-empty compact set $K$, let $(S_{u_m})$ be 
a sequence of Steiner symmetrizations as described 
in the statement of the theorem,
and let $(K_m)$ be the sequence of rotated symmetrals
defined in the statement of the theorem.
We construct the rotation $\calR_m$ in \eqref{rotated_sequence}
as a composition $\calR_m=\calR'_m\dots \calR_1'$,
where $\calR'_m$ is recursively defined as the rotation
that sends $\calR_{m-1}u_m$ to $e_1$ 
and fixes the subspace orthogonal to these two vectors, and $\calR_0'=\calI$. Note that $\calR'_m$ is a rotation by $\alpha_m$.
By the commutation rule \eqref{comm_rule},
\[
\calR_{m+1} \calS_{u_{m+1}} 
= \calR_{m+1}'\calS_{\calR_m u_{m+1}}\calR_m\,,
\]
which gives for $(K_m)$ the recursion relation
$$
K_{m+1}= \calR_{m+1}'\calS_{\calR_m u_{m+1}} K_m\,.
$$

By Blaschke's selection principle, $(K_m)$ has subsequences 
that converge in Hausdorff distance.  
Let $L_1$ and $L_2$ be limits of such subsequences.  
We want to prove that $L_1=L_2$.

We will first show that
\begin{equation}\label{cerchi}
\voln((L_1)_\de\setminus B_{r,q}) = 
\voln((L_2)_\de\setminus B_{r,q})
\end{equation}
for all $q\in e_1^\perp$ and all $r,\de\ge 0$. 
By the assumption that
$(\al_m)$ is square summable, $\gamma=\prod_{m=1}^\infty \cos \alpha_m>0$.
Let $\calT_m'$ be the linear transformation 
defined by $\calR_m'\calS_{\calR_{m-1} u_m}$ on $e_1^\perp$
and consider the composition $\calT_m=\calT_m'\dots \calT_1'$. 
By~\eqref{T-bound},
the sequence $(\calT_m)$ converges to a linear transformation
${\calT}$ that satisfies
$|\calT x|\ge \gamma |x|$ for all $x\in e_1^\perp$.
In particular, $\calT$ is invertible on $e_1^\perp$.
For each $m\geq 1$,  let $p_m=\calT_m\calT^{-1}q$. Then
$p_m=\calT_m' p_{m-1}$, $p_m$ converges to $q$ and 
$$
 \calR'_m\calS_{\calR_{m-1}u_m}B_{r,p_{m-1}}
=B_{r, p_{m}}.
$$
Inequalities~\eqref{simm_and_setminus} and~\eqref{simm_and_nbhd} 
imply that the sequence
$\voln\bigl((K_m)_\de\setminus B_{r,p_m}\bigr)$
is monotonically decreasing, 
\begin{equation*}\begin{aligned}
\voln((K_m)_\de\setminus B_{r,p_m})
&\ge \voln(\calS_{\calR_mu_{m+1}}(K_m)_\de\setminus 
\calS_{\calR_mu_{m+1}}B_{r,p_m})\\
&\ge \voln((\calS_{\calR_mu_{m+1}}K_m)_\de\setminus 
\calS_{\calR_mu_{m+1}}B_{r,p_m})\\
&= \voln((K_{m+1})_\de\setminus B_{r,p_{m+1}}),
\end{aligned}\end{equation*}
hence convergent.
In the last line, we have used the rotational
invariance of Lebesgue measure and the recursion formula
for $K_m$.  Passing to the limit 
along the subsequences converging to $L_1$ and $L_2$
and using Lemma~\ref{lem:symmdiff} yields~\eqref{cerchi}. 

Since half-spaces can be written as increasing unions of balls,
we can take a monotone limit in \eqref{cerchi}
to obtain that \eqref{semispazi} holds, with $H_i=(L_i)_\de$, for all
$q\in e_1^\perp$ and all $t\in\R$.
Lemma~\ref{lem:Radon} implies that 
the parallel sets $(L_1)_\de$ and $(L_2)_\de$ 
agree up to a null set. 
To complete the proof,
suppose that $L_1\ne L_2$. Then there exists a point
$x$ that lies in one of the two sets but not the other,
say $x\in L_1\setminus L_2$. If we choose
$\de = \frac{1}{2} \dist(x,L_2)$, then
$(L_1)_\de\supset B_{\de,x}$ 
while $(L_2)_\de\cap B_{\de,x}=\emptyset$. This is
impossible since the parallel sets agree up to a null set,
and we conclude that $L_1=L_2$.  
\hfill$\Box$

\section{Uniformly distributed sequences}\label{section5}

A sequence $(u_m)$ in $S^1$ is called
\emph{uniformly distributed} in the sense of Weyl, 
if the fraction of terms in 
the initial segment $(u_m)_{m\le N}$ that fall into
any given arc $I$ in $S^1$ converges to ${\lambda_1(I)}/({2\pi})$
as $N$ tends to infinity, where $\lambda_1(I)$ is the length of $I$. 
A classical example is the Kronecker sequence $u_m=(\cos m\alpha,
\sin m\alpha)$ for $m\ge 1$, which is uniformly distributed
if $\alpha$ is not a rational multiple of $\pi$.

\begin{theorem} [The Kronecker sequence] \label{teo:kronecker}
Let $u_m=(\cos m\alpha,\sin m\alpha)$ for $m\ge 1$,
and assume that $\alpha$ is not a rational 
multiple of $\pi$. Let $K$ be a non-empty compact set.
Then the symmetrals $\calS_{u_m}\dots\calS_{u_1}K$
converge in Hausdorff distance and in symmetric difference
to the closed centered ball $K^*$ equimeasurable with~$K$.
\end{theorem}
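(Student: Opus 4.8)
The plan is to follow the same three-step scheme used in the proof of Theorem~\ref{main_theorem}, but with the simplification that we already know \emph{which} ball the sequence should converge to. By Blaschke's selection principle, it suffices to show that every subsequential limit (in Hausdorff distance) of $(K_m)$, where $K_m=\calS_{u_m}\cdots\calS_{u_1}K$, equals $K^*$; then the whole sequence converges in Hausdorff distance, and Lemma~\ref{lem:symmdiff}(ii) upgrades this to convergence in symmetric difference and gives $\lambda_2(L)=\lambda_2(K)$, so indeed $L=K^*$. So fix a subsequence $K_{m_k}\to L$.

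First I would exploit the monotone functionals. For $q\in\R^2$ and $r,\de>0$, the quantity $\lambda_2\bigl((K_m)_\de\setminus B_{r,q_m}\bigr)$ decreases along the sequence provided the centers $q_m$ are transported correctly: setting $\calT_m'$ to be the linear map induced by $\calS_{u_m}$ and tracking $B_{r,q}$ under symmetrization as in Section~\ref{section4}. But here $u_m$ itself is a rotation of a fixed direction, so the relevant linear maps do \emph{not} contract to an invertible limit — instead the analogue of $\calT_m$ typically collapses. The clean way around this is to observe that for each \emph{fixed} direction $v\in S^1$ and each $r,\de$, the value $\lambda_2\bigl((K_m)_\de\setminus B_{r,q}\bigr)$ with $q=o$ (center at the origin, which every $\calS_{u_m}$ fixes) is monotonically non-increasing in $m$ by \eqref{mono}, hence convergent; passing to the subsequence gives $\lambda_2\bigl(L_\de\setminus B_r\bigr)=\inf_m \lambda_2\bigl((K_m)_\de\setminus B_r\bigr)$ for all $r,\de>0$. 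The point is to show this forces $L$ to be rotation invariant.

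The heart of the argument is the rigidity step, where uniform distribution enters. Suppose $L$ is not a centered ball. Then $L_\de$ is not a centered ball for small $\de$, so by Lemma~\ref{lem:one_three} (applied with $K=L_\de$) there is a direction $v$ and radii $r,\eta$ with $\lambda_2\bigl(\calS_v (L_\de)_\eta\setminus B_r\bigr)<\lambda_2\bigl((L_\de)_\eta\setminus B_r\bigr)$; combining with \eqref{simm_and_nbhd} this yields a strict deficit $\lambda_2\bigl((\calS_v L)_{\de'}\setminus B_{r'}\bigr)\le\lambda_2\bigl(L_{\de'}\setminus B_{r'}\bigr)-2\kappa$ for some $\de',r'>0$ and $\kappa>0$. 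By continuity (Hausdorff continuity of $K\mapsto K_\de$ and of the functional in the direction variable) the same deficit, with $\kappa$ in place of $2\kappa$, persists for all directions $w$ in an arc $I$ around $v$ and for all sets $K_{m_k}$ with $k$ large (since $K_{m_k}\to L$). Now uniform distribution of $(u_m)$ guarantees that infinitely many — in fact a positive density of — the $u_m$ with $m>m_k$ land in $I$; each such symmetrization strictly drops $\lambda_2\bigl((K_m)_{\de'}\setminus B_{r'}\bigr)$ by at least $\kappa$, while no step ever increases it. This makes the monotone sequence $\lambda_2\bigl((K_m)_{\de'}\setminus B_{r'}\bigr)$ diverge to $-\infty$, absurd. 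Hence every subsequential limit is a centered ball, necessarily $K^*$ by volume preservation, and Theorem~\ref{teo:kronecker} follows. The main obstacle is making the ``persistent uniform deficit on an arc'' rigorous: one must choose the arc $I$, the parameters $\de',r',\kappa$, and the index $k$ in the right order, using the continuity of $v\mapsto\calS_v M$ in symmetric difference for a \emph{fixed} compact $M$ together with the stability of the whole construction under small Hausdorff perturbations of $M$ — only the direction $v$ and the center ($=o$) being special makes this uniformity available, which is precisely why the Kronecker hypothesis (directions equidistributed, origin fixed) is the natural setting.
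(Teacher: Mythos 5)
Your strategy tries to use only the uniform distribution of $(u_m)$ to drive the argument, whereas the paper's proof never invokes equidistribution at all: it exploits the algebraic structure of the Kronecker sequence to write $K_m=(\calS\calR)^m K$ with $\calS=\calS_{e_1}$ and $\calR$ a fixed rotation, establishes via Lemma~\ref{lem:one_three} that every Hausdorff subsequential limit $L$ satisfies $\calS\calR L=\calR L$ and $\calS L=L$, and concludes because the two reflections at $e_1^\perp$ and $u^\perp$ generate a dense subgroup of rotations. This difference matters, and in fact your more general route cannot be made to work.

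The concrete gap is in the ``persistent uniform deficit on an arc'' step. The deficit estimate
\[
\lambda_2\bigl((\calS_w M)_{\de'}\setminus B_{r'}\bigr)\le \lambda_2\bigl(M_{\de'}\setminus B_{r'}\bigr)-\kappa
\]
is available only for sets $M$ that are Hausdorff-close to $L$ and directions $w$ in the chosen arc $I$. You know $K_{m_k}\to L$ only along the Blaschke subsequence $(m_k)$; you have no control over $K_{m-1}$ for the in-between indices $m$, and these are precisely the indices at which you would like to cash in the drop when $u_m\in I$. Uniform distribution puts infinitely many $u_m$ into $I$, but there is no reason the sets $K_{m-1}$ at those indices are close to $L$ rather than to some other subsequential limit $L'$ for which the arc $I$ gives no deficit. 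Thus the monotone sequence $\lambda_2\bigl((K_m)_{\de'}\setminus B_{r'}\bigr)$ is not forced to take infinitely many $\kappa$-sized drops, and the contradiction does not follow. Indeed, the argument as written does not use anything beyond uniform distribution and the fact that the symmetrizations fix $o$ — both of which hold for the sequences $\beta_m=\sum_{k\le m}\vartheta k^{-\sigma}$, $\sigma\in(1/2,1)$, discussed right after Theorem~\ref{teo:kronecker} in Section~\ref{section5}, for which the Steiner symmetrals provably do \emph{not} converge. So your proposal would prove a statement the paper shows to be false, confirming that the gap is not a removable technicality: some genuinely Kronecker-specific structure (such as the single-map iteration $(\calS\calR)^m$ the paper uses) must enter.
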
 

\begin{proof} Let $\calR$ be the 
rotation that sends $u=(\cos\alpha, \sin\alpha)$ 
to $e_1$, and let $\calS=\calS_{e_1}$ be the Steiner 
symmetrization in the direction of $e_1$. 
It suffices to show that
\[
K_m  = \calR^m\calS_{u_m}\dots \calS_{u_1}K
\]
converges to $K^*$. 

By the commutation relation \eqref{comm_rule}, $K_m=(\calS\calR)^mK$. 
Let $(K_{j_m})$ be a subsequence that converges in Hausdorff
distance to a compact set $L$. 
By Lemma~\ref{lem:symmdiff}, the sequence $\bigl((K_{j_m})_\de\bigr)$ 
converges in symmetric difference to $L_\de$ 
for each $\de> 0$.  We estimate
\begin{equation}
\label{eq:mono}\begin{aligned}
\voln(L_\de \setminus B_r) &=\voln(\calR L_\de \setminus  B_r)\\
&\ge\voln(\calS\calR L_\de \setminus \calS B_r)\\
&=\lim_{m\to\infty} \voln(\calS\calR (K_{j_m})_\de\setminus B_r)\\
&\ge \inf_m \voln((K_{j_m+1})_\de\setminus B_r)\,.
\end{aligned}\end{equation}
The inequality in the second line follows 
from \eqref{simm_and_setminus}.
The third line uses the continuity of the Steiner symmetrization 
with respect to the symmetric difference metric and that
$\calS B_r=B_r$.  The fourth line  follows from
\eqref{simm_and_nbhd} and the identity
$\calS\calR K_{j_m}=K_{j_m+1}$. 

On the other hand, the sequence $\voln((K_m)_\de\setminus B_r)$
is monotone decreasing, for each $\de,r>0$, by 
\eqref{simm_and_nbhd} and \eqref{simm_and_setminus},
hence convergent along the entire sequence,
and by Lemma~\ref{lem:symmdiff}, its limit is given by 
$\voln\left( L_\de\setminus B_r\right)$.
This means that all inequalities in
\eqref{eq:mono} hold with equality and, in particular, 
\[
 \voln(\calR L_\de \setminus B_r)= \voln(\calS\calR L_\de \setminus \calS B_r).
\]

By construction,
$L$ is symmetric under reflection at $e_1^\perp$.
By Lemma~\ref{lem:one_three} 
and the fact that $\calR L_\de=(\calR L)_\de$, we have 
$\calS\calR L=\calR L$, i.e., $L$ is also symmetric
under reflection at $u^\perp$.  Since
$\alpha$ is incommensurable with $\pi$, these two 
reflections generate a dense subgroup of rotations, and we 
conclude that $L=K^*$.  Since the subsequence was arbitrary,
the entire sequence $(K_m)$ converges to $K^*$ in Hausdorff 
distance and in symmetric difference.
\end{proof}

One may wonder if every uniformly distributed
sequence of directions gives rise to a convergent sequence of
Steiner symmetrizations.  Since a sequence of directions chosen 
independently and uniformly at random from $S^1$
is almost surely uniformly distributed 
\cite[Theorem 3.2.2]{KN} and
the corresponding Steiner symmetrizations 
almost surely converge to the ball~\cite{Mani,vS,Volcic}, 
most uniformly distributed sequences of directions
produce convergent sequence of Steiner symmetrizations.

Remarkably, there are exceptions. In the notation 
of Example~\ref{example_non_convergence},  let
$(\alpha_m)$ be a nonincreasing sequence of 
positive numbers 
and set $\beta_m=\sum_{k=1}^m \al_k$ for $m\ge 1$.
If
\[
\lim_{m\to\infty} \al_m=0\,,\qquad
\lim_{m\to\infty} m\al_m=\infty\,, \qquad
\text{and}\quad
\sum_{m=1}^\infty\alpha_m^2<\infty\,, \]
then 
$(u_m)=(\cos\be_m,\sin\be_m)$ is uniformly 
distributed on $S^1$ (see~\cite[Theorem 2.5]{KN}). 
This includes in particular
sequences of the form $\al_m=\vartheta m^{-\sigma}$ 
with $\sigma\in (1/2,1)$ and $\vartheta>0$.
But the corresponding sequence of Steiner symmetrals
of the compact set in Example~\ref{example_non_convergence}
does not converge.

Uniformly distributed sequences play an
important role in quasi-Monte Carlo methods,
because they share many properties of random 
sequences.  In some cases, they
provide even better approximations to 
integrals than typical random sequences.
The quality of the approximation defined by
a sequence $(u_m)$ in $S^1$ is determined by its
{\em discrepancy}
$$
D(N)=\sup_{I\subset S^1} 
\left\vert
\frac{\# \{m\le N: u_m\in I\}}{N} - \frac{\la_1(I)}{2\pi}\right\vert\,,
$$
which describes how much the fraction of
the initial segment $(u_m)_{m\le N}$ that fall into
any given arc $I$ in $S^1$ differs from
${\lambda_1(I)}/({2\pi})$. The best approximations are provided by
sequences of {\em minimal discrepancy}, that is by sequences for which $D(N)$ is proportional to
$(\log N)/N$. Note that the discrepancy of a
Kronecker sequence depends on the diophantine 
properties of $\alpha/\pi$, and that the discrepancy of the sequence $(\cos \beta_n,\sin \beta_n)$ which corresponds to  $\al_n=\vartheta n^{-\sigma}$, with $\sigma\in (1/2,1)$ and $\vartheta>0$, has asymptotic behavior  $n^{-\sigma}$ \cite{KN}.

\begin{problem} 
If $(u_m)$ is uniformly distributed in $S^1$ and of minimal discrepancy, do the Steiner symmetrals $\calS_{u_m}\dots \calS_{u_1}K$ converge to $K^*$ for each compact set $K$?
\end{problem}

\section{Steiner symmetrization along a finite
set of directions}\label{section6}

\medskip Our final example
concerns sequences of iterated Steiner 
symmetrization that use finitely many directions. 
Klain proved the elegant result that  when the 
initial set is a convex body then such sequences  
always converge~\cite[Theorem 5.1]{Klain}. 
The techniques developed in this paper allow 
us to extend his result to compact sets. 

\begin{theorem} [Klain's Theorem holds for
compact sets]\label{teo:klain}
Let $(u_m)$ be a sequence of vectors chosen from a finite set
$F=\{v_1,\dots, v_k\}\subset\Sn$. Then, for every compact set
$K\subset\R^n$, the symmetrals 
\[
K_m= \calS_{u_m}\dots \calS_{u_1}K
\]
converge in Hausdorff distance and in symmetric difference
to a compact set~$L$.  Furthermore, $L$ 
is symmetric under reflection in each of the directions 
$v\in F$ that appear in the sequence infinitely often.
\end{theorem}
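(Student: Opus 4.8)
The plan is to mirror the proof of Theorem~\ref{main_theorem}: produce subsequential limits, show they share enough geometric data and enough symmetry to be forced equal, and then read off convergence in both metrics. First I would discard a finite initial block. Let $N_0$ be large enough that $u_m$ belongs to the set $F_\infty$ of directions occurring infinitely often for every $m>N_0$, and replace $K$ by $\calS_{u_{N_0}}\dots\calS_{u_1}K$, a compact set; then convergence of $(K_m)$ is unaffected, and I may assume every $u_m$ lies in $F_\infty$ and every $v\in F_\infty$ is used infinitely often. Since $\calS_u$ respects inclusions and $\calS_uB_{r,q}=B_{r,q'}$ with $q'$ the orthogonal projection of $q$ onto $u^\perp$ (so $|q'|\le|q|$), all the $K_m$ stay in a fixed ball; by Blaschke's selection principle $(K_m)$ has Hausdorff-convergent subsequences, whose limits I call subsequential limits.

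The first tool is a monotone functional. Given $p\in\R^n$, set $p_0=p$ and let $p_m$ be the orthogonal projection of $p_{m-1}$ onto $u_m^\perp$; then $\calS_{u_m}B_{r,p_{m-1}}=B_{r,p_m}$, so by~\eqref{mono} the quantity $\voln\bigl((K_m)_\de\setminus B_{r,p_m}\bigr)$ is nonincreasing in $m$, hence convergent with a limit depending only on $\de,r,p$. Because $F_\infty$ is finite and each of its directions occurs infinitely often, the composition of the hyperplane projections $\pi_{u_m^\perp}$ converges to the orthogonal projection onto $V:=\bigcap_{v\in F_\infty}v^\perp$, so $p_m\to P_Vp$. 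Passing to the limit along two subsequences converging to subsequential limits $L_1$ and $L_2$, and using Lemma~\ref{lem:symmdiff}(i) to control $\voln\bigl((K_{m_j})_\de\setminus B_{r,p_{m_j}}\bigr)$ under Hausdorff convergence exactly as in Theorem~\ref{main_theorem}, I obtain $\voln\bigl((L_1)_\de\setminus B_{r,p}\bigr)=\voln\bigl((L_2)_\de\setminus B_{r,p}\bigr)$ for all $\de,r>0$ and all $p\in V$; letting $r\to0$ also gives $\voln\bigl((L_1)_\de\bigr)=\voln\bigl((L_2)_\de\bigr)$ for every $\de$.

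Next I would show that every subsequential limit $L$ satisfies $\calS_vL=L$ for each $v\in F_\infty$. Two elementary facts are needed: a Hausdorff limit of $\calS_v$-fixed compact sets is $\calS_v$-fixed, and $\calS_v$ is upper semicontinuous for Hausdorff convergence, in the sense that $\limsup_m\calS_vK_m\subseteq\calS_v\bigl(\lim_m K_m\bigr)$ (this follows from the reverse Fatou lemma applied to the one-dimensional sections). Fix $v$. From the infinitely many indices $m$ with $u_{m+1}=v$, extract a subsequence along which $K_m\to M$ and $K_{m+1}=\calS_vK_m\to M'$. Then $M'\subseteq\calS_vM$ by upper semicontinuity, while $\voln\bigl((\calS_vM)_\de\setminus B_{r,p}\bigr)\le\voln\bigl(M_\de\setminus B_{r,p}\bigr)$ by~\eqref{mono} since $p\in V\subseteq v^\perp$; as $M,M'$ are both subsequential limits, the previous step forces equality for all $\de,r>0$, so $\calS_vM=M$ by Lemma~\ref{lem:one_three}. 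A bootstrapping argument then propagates this from the particular limits $M$ to every subsequential limit, using that the monotone functional has the \emph{same} limit along every subsequence (so that $\calS_vL$ must share the functional data of $L$). \emph{This is the step I expect to be the main obstacle.} In Klain's convex setting one passes to the limit directly in $K_{m+1}=\calS_{u_{m+1}}K_m$ because Steiner symmetrization is continuous on convex bodies; for general compact sets this fails, and the delicate point is to correlate ``$K_m$ is near a prescribed limit'' with ``the direction used next is $v$''. The monotone functional of Lemma~\ref{lem:one_three}, together with the upper semicontinuity above, is what substitutes for that continuity.

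Once $\calS_vL=L$ for all $v\in F_\infty$, the same holds for every parallel set $L_\de$: if $L$ is $\calS_v$-fixed then $t\mapsto\dist(y+tv,L)$ is nondecreasing for $t\ge0$ on every line parallel to $v$, so $L_\de$ meets such a line in a centered segment. Hence, for two subsequential limits $L_1,L_2$, the sets $(L_1)_\de$ and $(L_2)_\de$ are $\calS_v$-fixed for every $v\in F_\infty$, agree on $\voln(\,\cdot\,\setminus B_{r,p})$ for $p\in V$, and have equal total volume; writing half-spaces as increasing unions of balls, and using the symmetry of $(L_i)_\de$ in all the directions of $F_\infty$ to supply the half-space directions not seen by $V$, Lemma~\ref{lem:Radon} yields $(L_1)_\de=(L_2)_\de$ up to a null set for every $\de>0$. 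The endgame of Theorem~\ref{main_theorem} then applies unchanged: if $x\in L_1\setminus L_2$, the choice $\de=\tfrac12\dist(x,L_2)$ gives $(L_1)_\de\supseteq B_{\de,x}$ and $(L_2)_\de\cap B_{\de,x}=\emptyset$, a contradiction, so $L_1=L_2$. A bounded sequence with a unique subsequential Hausdorff limit converges to it, so $K_m\to L$ in the Hausdorff metric; Lemma~\ref{lem:symmdiff}(ii) upgrades this to $\voln(K_m\De L)\to0$, and the reflection symmetry of $L$ in each $v\in F_\infty$ is exactly what was established in the symmetry step.
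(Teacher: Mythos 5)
Your plan diverges from the paper's proof at the key combinatorial step, and the divergence opens real gaps. The paper's central device — the permutation $\pi_m$ recording the order in which the directions $v_1,\dots,v_k$ first occur in the tail $(u_i)_{i\ge m}$, and the extraction of a subsequence along which this permutation is constant — is absent from your argument. That device is what makes the inductive proof of $\calS_{v_i}L=L$ work: for a \emph{specific} subsequence $(K_{j_m})\to L$, it lets one locate the next index $j_m'$ where $v_i$ appears, run \eqref{simm_and_setminus} and \eqref{simm_and_nbhd} along the block $u_{j_m+1},\dots,u_{j_m'-1}$ (which fixes $L$ by the inductive hypothesis), deduce that $(K_{j_m'-1})_\de\to L_\de$ in symmetric difference, and then apply Lemma~\ref{lem:one_three}. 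Without that combinatorial structure, your proof instead calls on a ``bootstrapping argument'' to propagate the symmetry from one subsequential limit to all of them — and you flag this yourself as the main obstacle, without supplying the argument. As written, this is a genuine hole, and it is precisely the hole the paper's permutation trick fills.

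There are two further problems. First, you assert without proof that the iterated orthogonal projections $p_m$ converge to $P_V p$, where $V=\bigcap_{v\in F_\infty}v^\perp$; for an arbitrary sequence drawn from a finite set with each direction recurring infinitely often, this is a nontrivial statement about products of projections (the paper avoids it entirely, which is part of why the permutation device is useful). Second, your application of Lemma~\ref{lem:Radon} only gives \eqref{semispazi} for $p\in V$, which is in general a \emph{proper} subspace of $v^\perp$ for each $v\in F_\infty$; the hoped-for fix — ``using the symmetry of $(L_i)_\de$ to supply the missing half-space directions'' — fails in simple cases (e.g.\ $F=\{e_1,e_2\}$ in $\R^3$, where the reflections at $e_1^\perp$ and $e_2^\perp$ fix the line spanned by $e_3=V$ and generate no new normals), so Lemma~\ref{lem:Radon} does not apply. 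The paper's proof never uses Lemma~\ref{lem:Radon} here: once the $\calS_{v_i}$-invariance of the subsequential limit $L$ is established, it shows directly that the monotone quantity $\voln((K_m)_\de\setminus L_\de)$ converges to zero along the \emph{entire} sequence, and then invokes Lemma~\ref{lem:symmdiff}. You should replace both the projection-convergence claim and the Radon-transform step with the paper's mechanism: extract a subsequence with constant permutation, prove $\calS_{v_i}L=L$ by induction using \eqref{eq:Klain-1} and Lemma~\ref{lem:one_three}, and then upgrade the subsequential convergence to full convergence via the monotonicity of $\voln((K_m)_\de\setminus L_\de)$.
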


\begin{proof}We follow Klain's argument. Dropping an initial segment 
$(K_m)_{m\le N}$ of the sequence and possibly substituting $F$ with one of its subsets,
we may  assume, without loss of generality,  that
each direction in $F$ appears infinitely often
in the sequence $(u_m)$.
The main idea is to construct a 
subsequence along which the directions $v_i\in F$
appear in a particular order.  
With each index $m$, we associate a permutation 
$\pi_m$ of the numbers $1,\dots, k$
that indicates the order in which the directions
$v_1,\dots, v_k$ appear for the first time among the directions $u_i$ with $i\geq m$.  Since there are only finitely many permutations,
we can pick a subsequence $(u_{j_m})$ such 
that the permutation $\pi_{j_m}$ is the same for each $m$.
By re-labeling the directions,
we may assume that this permutation is the identity.
Passing to a further subsequence, we may assume that
every direction in $F$ appears in each segment 
$u_{j_m}, u_{j_m+1},\dots, u_{j_{m+1}}$.

By the Blaschke selection principle,
there is a subsequence (again denoted by $(K_{j_m})$)
that converges in Hausdorff distance to some 
compact set $L$.  We note for later use that
for each $\de>0$, the entire sequence $\bigl(\voln((K_m)_\de)\bigr)$ is 
decreasing by \eqref{simm_and_nbhd}, hence convergent.
By Lemma~\ref{lem:symmdiff}, the limit is given by
\begin{equation}\label{mono-0}
\inf_m \voln((K_m)_\de)=\lim_{m\to\infty}
\voln((K_{j_m})_\de) =\voln(L_\de)\,.
\end{equation}

We show by induction that $S_{v_i}L=L$
for $i=1,\dots, k$.  For $i=1$ observe that $u_{j_m}=v_1$. Therefore $(K_{j_m})$ is symmetric with respect to $v_1^\perp$ and the same is true for $L$.
Suppose we already know that $L$ is
invariant under Steiner symmetrization in
the directions $v_1,\dots v_{i-1}$. If
$j'_m$ is the index where $v_i$ appears for the first time after $j_m$,
then the inductive hypothesis implies that
$\calS_{u_{j'_m-1}}\dots \calS_{u_{j_m+1}} L=L$. 
By~\eqref{simm_and_setminus} and~\eqref{simm_and_nbhd} we have, 
for each $\de>0$,
\begin{equation}\label{eq:Klain-1}
\begin{aligned}
\voln((K_{j_m})_\de\setminus L_\de)
&\ge \voln(\calS_{u_{j'_m-1}}\dots \calS_{u_{j_m+1}}(K_{j_m})_\de 
\setminus L_\de)\\
&\ge\voln((\calS_{u_{j'_m-1}}\dots
\calS_{u_{j_m+1}}K_{j_m})_\de\setminus L_\de)\\ 
&= \voln((K_{j'_m-1})_\de\setminus L_\de)\,.
\end{aligned}
\end{equation}
Since $((K_{j_m})_\de)$ converges to $L_\de$ in symmetric difference,
the left hand side of the inequality converges to zero.
Therefore, the right hand side converges to zero,
and by \eqref{mono-0}, the sequence
$(K_{j'_m-1})_\de$ likewise converges to $L_\de$ in symmetric difference.
We continue the argument as in \eqref{eq:mono} and estimate
\begin{equation}\label{eq:mono2}
\begin{aligned}
\voln(L_\de \setminus B_r) &\ge \voln(\calS_{v_i}L_\de \setminus \calS_{v_i}B_r)\\
&= \lim_{m\to\infty} \voln(\calS_{v_i}(K_{j'_m-1})_\de\setminus B_r)\\
&\ge\inf_m \voln((K_{j'_m})_\de\setminus B_r)\,.
\end{aligned}
\end{equation}
The inequality in the first line follows from
\eqref{simm_and_setminus}. 
In the second line we have used
the convergence of $(K_{j_m'-1})_\de$ in symmetric difference
and the continuity of the Steiner symmetrization
with respect to the symmetric difference distance.
The inequality in the third line is a consequence of 
\eqref{simm_and_nbhd} and of the equality $S_{v_i} K_{j'_m-1}=K_{j'_m}$.

Since $\bigl(\voln((K_m)_\de\setminus B_r)\bigr)$ is a decreasing sequence
by \eqref{simm_and_setminus}, and since it contains the  
subsequence $\bigl(\voln((K_{j_m})_\de\setminus B_r)\bigr)$ 
which converges to  
$\voln(L_\de\setminus B_r)$,  the first and last term 
in~\eqref{eq:mono2} are equal.  In particular,
the first line holds with equality for each $\de>0$.
By Lemma~\ref{lem:one_three}
this implies $L=\calS_{v_i}L$, which concludes the inductive step.

It remains to prove that the entire sequence converges. 
Since $L$ is invariant under Steiner symmetrization
in each of the directions $v_1,\dots , v_k$ in $F$, we have, 
by the same reasoning as in \eqref{eq:Klain-1} and in the 
lines following it,  that 
\[
\voln((K_{j_m})_\de\setminus L_\de)
\ge \voln((K_j)_\de\setminus L_\de)
\]
for every $j\ge j_m$. We conclude that 
$(K_m)_\de$ converges to $L_\de$ in 
symmetric difference along the entire sequence, for each $\de\ge 0$. 
By Lemma~\ref{lem:symmdiff}, 
$(K_m)$ converges to $L$ both in Hausdorff distance and 
in symmetric difference.
\end{proof}

\begin{problem} 
Do iterated Steiner symmetrals $\calS_{u_m}\dots \calS_{u_1}K$
always converge in shape,
without any assumptions on the sequence of directions? 
\end{problem}

\begin{problem}
Assume that a sequence of directions $(u_m)$ is such that  $(\calS_{u_m}\dots \calS_{u_1}C)$ converges to $C^*$ for each convex body $C$. 
Is it true that  $(\calS_{u_m}\dots \calS_{u_1}K)$ converges to $K^*$ for each compact set $K$?
\end{problem}





\begin{thebibliography}{BK}
\bibliographystyle{plain}

\bibitem{Be1}
G.~A.~Beer, {Starshaped sets and the Hausdorff metric}, Pacific J. Math. {\bf 69} (1955), 21--27.

\bibitem{Be2}
G.~A.~Beer, {The Hausdorff metric and convergence in measure}, Michigan  Math. J. {\bf 21} (1974), 63--64.

\bibitem{BG}
G.~Bianchi and P.~Gronchi, {Steiner symmetrals and their distance from a ball}, Israel J. Math. {\bf 135} (2003), 181--192.

\bibitem{BKLYZ}
G.~Bianchi, D.~Klain, E.~Lutwak, D.~Yang, and G.~Zhang, {A countable set of directions is sufficient for Steiner symmetrization}, Adv. in Appl. Math. {\bf 47} (2011), 869--873. 

\bibitem{BF}
A.~Burchard and M.~Fortier, {Random polarizations}, arXiv:1104.4103v3 (2011).

\bibitem{CS}
C.~Carath{\'e}odory and E.~Study, {Zwei {B}eweise des {S}atzes, da\ss\ der {K}reis unter allen {F}iguren gleichen {U}mfanges den gr\"o\ss ten {I}nhalt hat}, Math. Ann. {\bf 68} (1909), 133--140.

\bibitem{CCF} 
M.~Chleb\'{i}k, A.~Cianchi, and N.~Fusco, {The perimeter inequality under {S}teiner symmetrization: Cases of equality},  Ann. of Math. (2) {\bf 162} (2005), 525--555.

\bibitem{Fortier}
M.~Fortier, {Convergence results for rearrangements: Old and new}, M.S. Thesis, University of Toronto, December 2010.


\bibitem{Gru-book}
P.~Gruber, {{C}onvex and {D}iscrete {G}eometry},  Springer Verlag, New York, 2007.

\bibitem{H} 
S.~Helgason, {The Radon Transform}, Birkh\"auser, Boston, 1999.


\bibitem{Klain}
D.~Klain, {Steiner symmetrization using a finite set of directions}, Adv. in Appl. Math.  {\bf 48} (2012), 340--353.

\bibitem{Klartag}
B.~Klartag, {Rate of convergence of geometric symmetrizations}, Geom. Funct. Anal. {\bf 14} (2004), 1322--1338.

\bibitem{KN} L. Kuipers and H. Niederreiter, {Uniform Distribution of Sequences}, Wiley-Interscience, New York, 1974.

\bibitem{Mani}
P.~Mani-Levitska, {Random {S}teiner symmetrizations}, Studia Sci. Math Hungar. {\bf 21} (1986), 373--378.

\bibitem{Steiner}
J.~Steiner, {Einfacher {B}eweise der isoperimetrischen {Haupts\"atze}}, J. Reine Angew. Math. {\bf 18} (1838), 281--296.
 
\bibitem{vS}
J.~{van Schaftingen}, {Approximation of symmetrizations and symmetry of critical points}, Topol. Methods Nonlinear Anal. {\bf 28} (2006), 61--85.

\bibitem{Volcic}
A.~Vol\v{c}i\v{c}, Random Steiner symmetrizations of sets and functions, Calc. Var. Partial Differential Equations, to appear, doi:10.1007/s00526-012-0493-4.

\bibitem{Z} L. Zalcman, {Offbeat integral geometry}, Amer. Math. Monthly {\bf 87} (1980), 161--175.

\end{thebibliography}
\end{document}